\numberwithin{equation}{section}
\newtheorem{theorem}{Theorem}
\newtheorem{OldTheorem}{Theorem}
\newtheorem{lemma}{Lemma}
\newtheorem{problem}{Problem}
\def\dist{{\rm dist}}
\def\sign{{\rm sign\,}}
\def\supp{{\rm supp\,}}
\def\H{\rm H}
\def\ZR{\ensuremath{\mathbb R}}
\def\ZZ{\ensuremath{\mathbb Z}}
\def\ZT{\ensuremath{\mathbb T}}
\def\ZI{\ensuremath{\mathbb I}}
\def\md#1#2\end{equation*}{\ifx0#1
\begin{equation*} #2 \end{equation*}\fi  %  single line display, no number
\ifx1#1\begin{equation}#2\end{equation}\fi   % single line display, number
\ifx2#1\begin{align*}#2\end{align*}\fi   % aligned display, no number
\ifx3#1\begin{align}#2\end{align}\fi    % aligned display, number
\ifx4#1\begin{gather*}#2\end{gather*}\fi  % multiline, not align, no number
\ifx5#1\begin{gather}#2\end{gather}\fi   % multinline, not align
\ifx6#1\begin{multline*}#2\end{multline*}\fi  %  display too long for one line
\ifx7#1\begin{multline}#2\end{multline}\fi  % as above, with numbers
\ifx8#1$$\begin{array}#2\end{array}$$\fi  %  display too long for one line
\ifx9#1\begin{eqnarray}#2\end{eqnarray}\fi  % as above, with numbers
}
\newcommand {\e }[1]{(\ref{#1})}
\newcommand {\lem }[1]{Lemma \ref{#1}}
\newcommand {\trm }[1]{Theorem \ref{#1}}
\begin{document}
\title{On exceptional sets of Hilbert transform}

\author
{G. A. Karagulyan}

\address{G. A. Karagulyan, Faculty of Mathematics and Mechanics, Yerevan State University, Alex Manoogian, 1, 0025, Yerevan, Armenia}
\email{g.karagulyan@ysu.am}

\subjclass[2010]{42B20}%
\keywords{Hilbert transform, exceptional null set, divergent Fourier series}
\date{}
\maketitle
\begin{abstract} We prove several theorems concerning the exceptional sets of Hilbert transform on the real line. In particular, it is proved that any null set is exceptional set for the Hibert transform of an indicator function. The paper also provides a real variable approach to the Kahane-Katsnelson theorem on divergence of Fourier series. 
\end{abstract}
\maketitle
\section{Introduction}
The study of exceptional sets is common in Harmonic Analysis and some related fields. One century ago Lusin \cite{Lus} proved that for any boundary null set $e$ (a set of measure zero) there exists a bounded analytic function on the unit disc, which has no radial limits at any point of $e$. This result was a significant complement to the theorem of Fatou, providing almost everywhere tangential convergence for the bounded analytic functions on the disc.  Kahane-Katznelson's \cite{KaKa} example of a continuous function, whose Fourier series diverges at any point of an arbitrary given null set was the counterpart of Carleson's \cite{Car} celebrated theorem on almost everywhere convergence of Fourier series. Some extensions of Kahane-Katznelson's theorem for Fourier series in different classical orthogonal systems the readers can find  in the papers \cite{ Bug,Buz,Gog,Khe,Luc,Pro,Ste,Tai}. 

It was discovered in the papers \cite{Kar1,Kar2} that such divergence phenomena is common for general sequences of bounded linear operators 
\begin{equation}\label{z12}
U_n:L^\infty(a,b) \to \text { bounded measurable functions on } (a,b)
\end{equation}
satisfying the localization property, that means for any function $f\in L^\infty(a,b)$ with $f(x)=1$, $x\in (\alpha,\beta)$, the sequence $U_nf(x)$ converges uniformly in $(\alpha,\beta)$. We denote by $\ZI_G$ the indicator function of a set $G\subset \ZR$. It was proved in \cite{Kar1} that
\begin{OldTheorem}[\cite {Kar1}]\label{OldT1}
	If the operator sequence \e{z12} satisfies the localization property, then for any null set $e\subset (a,b)$ there exists a measurable set $G\subset (a,b)$ such that $U_n\ZI_G(x)$ diverges at any $x\in e$.
\end{OldTheorem}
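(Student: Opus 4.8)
The plan is to construct $G$ by a multi-scale inductive scheme that forces $\ZI_G$ to have oscillating density in every neighbourhood of every point of $e$, and to use the localization property as the tool that prevents the oscillations built at one scale from being washed out by the finer stages. First I would unpack localization: taking $(\alpha,\beta)=(a,b)$ and $f\equiv 1$ shows that $U_n1$ converges uniformly on $(a,b)$, so by linearity $U_nf$ converges uniformly on any interval on which $f$ is constant, and $U_nf(x)$, $U_ng(x)$ share their convergence behaviour at $x$ whenever $f=g$ on a neighbourhood of $x$. In particular $U_n\ZI_G(x)$ automatically converges at every $x$ interior to $G$ or to its complement, so any divergence must be engineered at points of oscillating density of $G$; these I place precisely on $e$. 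Since $e$ is null I fix a decreasing sequence of open sets $e\subset\cdots\subset V_2\subset V_1$ with $|V_k|<\varepsilon_k\downarrow 0$, each a countable disjoint union of intervals, and look for $G=\bigcup_k G_k$ with $G_k\subset V_k$ accumulating on $e$ and $|G|\le\sum_k|G_k|$ as small as desired.

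The engine is a quantitative oscillation lemma, and this is where the hypotheses on $U_n$ in \e{z12} are really spent: on any prescribed interval $I$ and for any threshold $N$ one should be able to select $H\subset I$ and indices $N\le m<n$ with $|U_n\ZI_H(x)-U_m\ZI_H(x)|\ge c$ for all $x$ in a definite portion of $I$, with $c>0$ independent of $I$. Applying this inside every component of $V_k$ and taking unions produces a packet $G_k\subset V_k$ together with, for each $x\in e$, indices $m_k(x)<n_k(x)\to\infty$ realizing oscillation $\ge c$ for $\ZI_{G_k}$ at $x$. I expect the main obstacle to be uniformity over the \emph{uncountable} set $e$: the oscillation must be caught at every point of $e$ at infinitely many scales simultaneously, which forces the selection inside each $V_k$ to be organised by a Vitali-type cover of $e\cap V_k$ together with a pigeonholing of the index pairs, so that no point of $e$ is ever skipped.

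It then remains to glue the packets into a single $G$ without losing the oscillations. Fix $x\in e$ and a stage $k$ with its indices $m_k(x)<n_k(x)$, and split $G$ at a small neighbourhood $W$ of $x$: the outer part $G\setminus W$ vanishes on $W$, so by localization $U_n\ZI_{G\setminus W}(x)$ converges and oscillates by $<c/3$ between $m_k(x)$ and $n_k(x)$ once $k$ is large. The inner part $G\cap W$ is governed by the stage-$k$ packet, which contributes oscillation $\ge c$; the remaining difficulty, and the second technical heart of the argument, is to bound the contribution of the still-finer packets $G_j$ ($j>k$) that also sit inside $W$. Here one must separate scales aggressively --- choosing the scales of the stages so that $G_{>k}$ is confined to a neighbourhood so much smaller than $W$ that at the moderate indices $m_k(x),n_k(x)$ its effect is $<c/3$ --- which is the place where a supplementary stability estimate, again drawn from boundedness and localization, is needed. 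Granting this, $U_n\ZI_G(x)$ oscillates by at least $c/3$ between $m_k(x)$ and $n_k(x)$ for infinitely many $k$, hence diverges at every $x\in e$; keeping $\sum_k\varepsilon_k$ small makes $G$ as thin as we wish, which completes the construction.
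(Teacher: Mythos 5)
You should know at the outset that this paper does not prove Theorem \ref{OldT1}: it is imported from \cite{Kar1}, so your sketch can only be measured against that source and against its own internal logic. Your preliminary reductions are sound and standard: localization plus linearity shows that the convergence behaviour of $U_nf(x)$ depends only on the germ of $f$ near $x$, hence $U_n\ZI_G$ can diverge only at points where every neighbourhood meets both $G$ and its complement in positive measure, and the overall architecture (packets $G_k$ inside shrinking open neighbourhoods $V_k$ of $e$, with localization controlling the outer part $G\setminus W$ and aggressive scale separation controlling the finer packets) is the right shape for such a construction.

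The genuine gap is the ``quantitative oscillation lemma,'' which you announce as the place ``where the hypotheses are really spent'' but never prove; it is the entire content of the theorem, and it cannot follow from \e{z12} plus localization as stated. The sequence $U_n\equiv 0$ satisfies every hypothesis you are given, yet no set $H$ produces any oscillation anywhere; the actual theorem of \cite{Kar1} carries a non-degeneracy hypothesis (there is some $f_0\in L^\infty$ for which $U_nf_0$ diverges on a set of positive measure) that the quotation in this paper suppresses, and any proof must locate and use it. Even granting that hypothesis, your lemma has the wrong shape in two respects. First, oscillation $\ge c$ ``on a definite portion of $I$'' is a positive-measure conclusion, and the null set $e\cap I$ can be entirely disjoint from any prescribed positive-measure subset of $I$; what the induction needs is oscillation at \emph{every} point of an open set containing $e\cap I$ (compare the form of Lemmas \ref{L2}--\ref{L3} here and of the Kahane--Katznelson lemma in Section 4, which are all ``everywhere on a set containing $e$'' statements). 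Your appeal to a Vitali cover and pigeonholing does not manufacture such a statement from a positive-measure one. Second, the uniformity of the constant $c$ over all intervals $I$ and all thresholds $N$ is itself a substantial claim, since the $U_n$ are not assumed translation- or dilation-invariant, and transferring divergence from the one point or one positive-measure set supplied by the non-degeneracy hypothesis to an arbitrary interval is precisely the technical heart of \cite{Kar1}. Until the oscillation lemma is reformulated in the ``everywhere on an open superset'' form and actually derived from localization together with the missing hypothesis, the proposal is a plausible plan rather than a proof.
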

In \cite{Kar2} we obtained full characterization theorems for exceptional sets of general sequences of operators with localization property.

In this paper we consider the exceptional null set problem for the Hilbert transform. 
The Hilbert transform of a function $f\in L^1(\ZR)$ is the integral
\begin{equation*}
Hf(x)=\lim_{\varepsilon\to 0}H_\varepsilon f(x)=\lim_{\varepsilon\to 0}\frac{1}{\pi}\int_{|t-x|>\varepsilon}\frac{f(t)}{x-t}dt.
\end{equation*}
It is well-known the almost everywhere existence of this limit for the integrable functions (see for example \cite {Zyg}, ch. 4.3). The maximal Hilbert transform is defined by
\begin{equation*}
H^*f(x)=\sup_{\varepsilon>0}|H_\varepsilon f(x)|.
\end{equation*}
Examples of exceptional sets for the Hilbert transform have been only considered by Lusin in his famous book (\cite{Lus}, page 459). It was proved in \cite {Lus} the existence of an everywhere dense continuum null set $e\subset \ZR$, such that $H^*f(x)=\infty $ on $e$ for some $f\in C(\ZR)\cap L^1(\ZR)$. The following theorems shows that any null set $e$ can serve as an exceptional set for the Hilbert transform of some indicator function. Moreover, if $e$ is additionally compact, then instead of the indicator function it can be taken a continuous function. 
\begin{theorem}\label{T1}
	For any null set $e\subset \ZR$ there exists a set $E\subset \ZR$ of finite measure such that
	\begin{equation*}
	H^*\ZI_{E}(x)=\infty,\quad x\in e.
	\end{equation*}
\end{theorem} 
Note that \trm {T1} can not be deduced from \trm {OldT1}, since the operators $H_\varepsilon $ do not satisfy the localization property. Its proof as well as the proof of the next theorem are essentially based on characteristic properties of Hilbert transform. 
\begin{theorem}\label{T2}
	For any closed null set $e\subset \ZR$ there exists a continuous function $f\in C(\ZR)\cap L^1(\ZR)$ such that
	\begin{equation*}
	H^*f(x)=\infty,\quad x\in e.
	\end{equation*}
\end{theorem}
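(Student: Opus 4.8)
The plan is to build $f$ as a rapidly convergent series $f=\sum_{n}g_n$ of continuous, compactly supported ``atoms,'' so that continuity and integrability of $f$ follow from $\sum_n(\|g_n\|_\infty+\|g_n\|_1)<\infty$, while the divergence $H^*f(x)=\infty$ on $e$ is produced by arranging, for every $x\in e$ and infinitely many $n$, a truncation scale $\varepsilon_n(x)\to 0$ at which $|H_{\varepsilon_n(x)}g_n(x)|$ is large and all the other atoms contribute only a bounded amount. The first step is therefore to isolate a quantitative, \emph{continuous} building block: a lemma asserting that for every $M>0$ and $\delta>0$ there is a continuous $g$, supported in a prescribed neighborhood of $e$, with $\|g\|_\infty\le\delta$ and $\|g\|_1\le\delta$, such that $H^*g(x)>M$ for all $x\in e$.

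Before addressing that lemma I would record the model computation driving it. A single continuous piecewise-linear bump has a \emph{continuous} Hilbert transform: differentiating and using $H\ZI_{(a,b)}(x)=\tfrac1\pi\log|\tfrac{x-a}{x-b}|$ shows that $(Hg)'$ has only logarithmic singularities, which integrate to a continuous function since $t\log|t|\to0$. Hence no single atom can blow up, and the divergence must come entirely from superposition. The prototype is $e=\{0\}$: taking $g$ odd with $g(t)=1/\log(1/t)$ for small $t>0$ gives, since \[H_\varepsilon g(0)=-\tfrac{2}{\pi}\int_\varepsilon^{1/2}\frac{g(t)}{t}\,dt+O(1),\] the divergence $H_\varepsilon g(0)=-\tfrac2\pi\log\log(1/\varepsilon)+O(1)\to-\infty$, with $g$ continuous and integrable. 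The general atom must reproduce this one-sided logarithmic mechanism simultaneously at every point of $e$.

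To obtain the atom I would transfer the indicator construction behind \trm{T1}, which yields a set $E$ of small measure (a union of short intervals clustering at $e$) with $H^*\ZI_E(x)>M$ on $e$. Replacing $\ZI_E$ by a continuous $g$ that agrees with it outside a boundary layer of width $\sigma$, chosen smaller than the truncation scales that witness the inequality $H^*\ZI_E(x)>M$, perturbs the relevant truncated integrals only slightly, so the strict inequality $H^*g(x)>M$ survives on $e$ while $\|g\|_1$ and $\|g\|_\infty$ stay small. Here closedness of $e$ is essential: it makes the complement a disjoint union of intervals in which the bumps are placed and, together with the lower semicontinuity of $g\mapsto H^*g$, lets me pass from the pointwise estimate to a uniform one on the compact pieces of $e$; it is also what lets the final $f$, assembled from atoms supported off $e$ with heights tending to $0$ toward $e$, be genuinely continuous with value $0$ on $e$.

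Finally I would assemble $f=\sum_n 2^{-n}g_n$ with $g_n$ the atom for $M_n=4^n$, $\delta_n=2^{-n}$, supported in shrinking neighborhoods $G_n\downarrow e$. Summability gives $f\in C(\ZR)\cap L^1(\ZR)$. For fixed $x\in e$ the witnessing scale $\varepsilon_n(x)$ can be placed in a thin shell where only $g_n$ is active; arranging $\varepsilon_n(x)\to0$ so these shells are disjoint, the contribution of $\sum_{m\ne n}g_m$ to $H_{\varepsilon_n(x)}$ is uniformly bounded, whence $H^*f(x)\ge 2^{-n}M_n-O(1)\to\infty$. The main obstacle is exactly the smoothing step of the third paragraph: the divergence of $H^*\ZI_E$ is generated at the very endpoints of the constituent intervals, which is precisely where the function must be modified to become continuous, so the whole argument hinges on pinning down \emph{at which} truncation scale the maximal Hilbert transform is large and smoothing strictly below that scale, uniformly over the noncompact set $e$, while ensuring that superposing countably many atoms creates no cancellation that destroys the blow-up.
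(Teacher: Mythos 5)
Your overall architecture (continuous atoms supported in shrinking neighborhoods $G_n\downarrow e$, truncated transform split into a near part and a far part) matches the paper's, but the assembly step contains a genuine gap --- exactly the one you flag at the end but do not resolve. You claim that at the witnessing scale $\varepsilon_n(x)$ the contribution of $\sum_{m\neq n}g_m$ to $H_{\varepsilon_n(x)}$ is $O(1)$. For $m<n$ this is false for the natural atoms: $\supp g_m$ lies at positive distance from $e$, so once $\varepsilon_n(x)$ drops below that distance you get $H_{\varepsilon_n(x)}g_m(x)=Hg_m(x)$, which is of size comparable to $M_m$, not $O(1)$; summed over $m<n$ this is a divergent quantity whose sign you have not controlled, and your own prototype (an odd bump with $H_\varepsilon g(0)\to-\infty$) mixed with indicator-type atoms (positive blow-up) shows the signs can fight. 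The paper turns this ``problem'' into the mechanism: it arranges $f_m\ge 0$ supported in $E_m\subset F_{m-1}\setminus F_m$ with the \emph{full} transform $Hf_m>2^m-1>0$ on all of $F_m\supset e$ (see \e{z9}), so the near terms $2^{-m}Hf_m(x)\ge 1-2^{-m}$ are uniformly positive and their divergent sum is precisely what makes $H^*f=\infty$; no single atom dominates. That positivity on all of $F_m$ is obtained cheaply by matching $H\ZI_{E_m}$ at the finitely many right endpoints $b_k^{(m)}$ and invoking monotonicity of $Hf_m$ off $\supp f_m$ --- a finite-dimensional reduction your uniform-smoothing step does not supply.

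Second, your far-field bound for $m>n$ does not follow from $\|g_m\|_\infty,\|g_m\|_1\le\delta$ alone: a function of small $L^1$ and $L^\infty$ norm concentrated just outside $(x-\varepsilon,x+\varepsilon)$ can still make $\int_{|t-x|>\varepsilon}|g_m(t)|/|x-t|\,dt$ of order $\delta\log(1/\varepsilon)$, which is not summable against the very rapidly shrinking scales forced by $G_n\downarrow e$. The paper needs the stronger, Whitney-localized smallness $|E_m\cap I_k|\lesssim\delta_k$ of \lem{L2}, which yields the uniform bound \e{g52} on $\int_{|t-x|>\varepsilon}\ZI_{E_m}(t)/|x-t|\,dt$ whenever $(x-\varepsilon,x+\varepsilon)\not\subset F_{m-1}$; this is the condition you would have to build into your atom lemma. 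Finally, in the non-compact case the decomposition of $e$ into compact pieces $e\cap[x_k,x_{k+1}]$ leaves the seam points $x_k$ unaccounted for; the paper repairs these with the special continuous function of \lem{L4}, a step absent from your sketch.
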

The proof of Kahane-Katznelson's theorem \cite{KaKa} uses methods of analytic functions. We will show in the last section that this theorem can be alternatively deduced from \lem {L1} below. 

The following questions are open.
\begin{problem}
	Is the statement of \trm {T2} valid for arbitrary null sets. 
\end{problem}
\begin{problem}\label{P2}
	Is the analogous of Kahane-Katznelson's theorem true for Walsh system (see for example \cite {Wade}).
\end{problem} 
Concerning to Problem 2 we note that Harris \cite{Har} has proved that for any compact null set $e\subset [0,1]$ there exists a continuous function, whose Walsh-Fourier series diverges at any $x\in e$.
\section{Intermediate results}

We say an open set $G\subset \ZR$ is of finite form (or finite-open), if it is a union of finite number of open intervals. For two measurable sets $E$ and $F$ we denote
\begin{equation*}
E\triangle F=(E\setminus F)\cup (F\setminus E)
\end{equation*}
and write $E\sim F$ in the case $|E\triangle F|=0$.
Let $E,E_n\subset $ be measurable sets. We write $F_n\Rightarrow F$ whenever we have
\begin{equation*}
|F_n\triangle F|\to 0 \text { as }n\to\infty.
\end{equation*}
For measurable functions $f$ and $f_n$, $n=1,2,\ldots $ the notation $f_n\Rightarrow f$ denotes a convergence in measure.

 The following theorem has own interest and it will be used in the proofs of the main theorems. 
\begin{theorem}\label{T3}
	Let 
	\begin{equation}\label{z1}
		\lambda>0,\quad \mu=\frac{1}{\pi}\ln (1-e^{-\lambda}).
	\end{equation}
	Then for an arbitrary measurable set $F\subset \ZR$ the sets
	\begin{align}
	&E=\left\{x\in \ZR\setminus F:\, H\ZI_{F}(x)<\mu\right\},\label{b38}\\
	&F^*=\{x\in \ZR\setminus E:\, \H\ZI_E(x)> \lambda \}\nonumber
	\end{align}
	satisfy the relations
	\begin{equation*}
	F\sim  F^*,\quad |E|=(e^\lambda-1)|F|.
	\end{equation*}
	Moreover, if $F$ is open (or finite-open), then we additionally have $F\subset  F^*$ (or $F=F^*$).
\end{theorem}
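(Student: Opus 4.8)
The plan is to settle the finite-open case by an explicit computation with a single rational function, and then to reach arbitrary measurable sets by approximation. Assume first that $F=\bigcup_{j=1}^{n}(a_j,b_j)$ with $a_1<b_1<\dots<a_n<b_n$, and put
\begin{equation*}
R(z)=\prod_{j=1}^{n}\frac{z-a_j}{z-b_j},\qquad \Im z>0.
\end{equation*}
Since $\int_{a_j}^{b_j}(z-t)^{-1}\,dt=\log\frac{z-a_j}{z-b_j}$, the boundary values of $R$ encode the Hilbert transform: after the harmless normalisation that makes $R(x)=e^{H\ZI_F(x)}$ for $x\in\ZR\setminus\overline F$, one has $R(x)>0$ off $\overline F$, while the jump $\Im\int_F(x+i0-t)^{-1}\,dt=-\pi$ on $F$ forces $R(x)<0$ there, so $\sign R=1-2\ZI_F$. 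Moreover $H\ZI_F$ is strictly decreasing on every component of $\ZR\setminus\overline F$, its derivative being $-\int_F(x-t)^{-2}\,dt<0$, so $R$ is strictly monotone there, running between the values prescribed by the neighbouring zero $a_j$ and pole $b_j$.

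First I would identify $E$. By \e{b38} the condition $H\ZI_F(x)<\mu$ reads $R(x)<\sigma$, where $\sigma=1-e^{-\lambda}\in(0,1)$ is the quantity under the logarithm in \e{z1}; since $R<0$ on $F$, this automatically selects points of $\ZR\setminus F$. Tracking $R$ across the complementary intervals — it falls from $1$ to $0$ on $(-\infty,a_1)$, from $+\infty$ to $0$ on each gap $(b_{j},a_{j+1})$, and from $+\infty$ to $1$ on $(b_n,\infty)$ — the equation $R=\sigma$ has exactly one root $c_j$ in each of the $n$ intervals $(-\infty,a_1),(b_1,a_2),\dots,(b_{n-1},a_n)$ and none in $(b_n,\infty)$. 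Hence $E=\bigcup_{j=1}^{n}(c_j,a_j)$ is finite-open, and since the $c_j$ are exactly the roots of the degree-$n$ polynomial $\prod_j(x-a_j)-\sigma\prod_j(x-b_j)$ (leading coefficient $1-\sigma$), we obtain the factorisation
\begin{equation*}
\prod_{j}(x-a_j)-\sigma\prod_{j}(x-b_j)=(1-\sigma)\prod_{j}(x-c_j).
\end{equation*}
Comparing the coefficients of $x^{n-1}$ gives $\sum_j c_j=(1-\sigma)^{-1}\big(\sum_j a_j-\sigma\sum_j b_j\big)$, whence
\begin{equation*}
|E|=\sum_{j=1}^{n}(a_j-c_j)=\frac{\sigma}{1-\sigma}\sum_{j=1}^{n}(b_j-a_j)=(e^{\lambda}-1)|F|,
\end{equation*}
which is the asserted measure identity.

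Next I would run the same device on $E=\bigcup_j(c_j,a_j)$ with $R_E(z)=\prod_{j=1}^{n}\frac{z-c_j}{z-a_j}$. The defining relation for $F^*$ becomes $R_E(x)>\theta$ with $\theta=(1-\sigma)^{-1}=e^{\lambda}$, and the monotonicity bookkeeping yields $F^*=\bigcup_{j=1}^{n}(a_j,d_j)$, where $d_j$ is the unique solution of $R_E=\theta$ lying to the right of $a_j$. The heart of the matter is the very factorisation above: evaluating it at $x=b_i$ kills the second product, so $\prod_j(b_i-a_j)=(1-\sigma)\prod_j(b_i-c_j)$, i.e.
\begin{equation*}
R_E(b_i)=\frac{\prod_{j}(b_i-c_j)}{\prod_{j}(b_i-a_j)}=\frac{1}{1-\sigma}=\theta.
\end{equation*}
As $b_i$ lies in the correct interval, this forces $d_j=b_j$ and therefore $F^*=\bigcup_j(a_j,b_j)=F$, proving the finite-open case with exact equality.

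For the general statement I would exhaust an open $F$ by finite-open $F_N\uparrow F$, with $E_N$ the set \e{b38} built from $F_N$, and pass to the limit: the measure identity $|E_N|=(e^{\lambda}-1)|F_N|$ survives directly, and for $x\in F$ the inequalities $H\ZI_{E_N}(x)>\lambda$ (valid since $x\in F_N=F_N^*$) can be carried to $H\ZI_{E}(x)\ge\lambda$ and upgraded to the inclusion $F\subset F^*$; an outer approximation of a measurable $F$ by open sets then transfers everything up to null sets, which is exactly why the conclusion degrades to $F\sim F^*$. I expect this limiting step to be the main obstacle. The Hilbert transform is not continuous on $L^\infty$ and the sets in \e{b38} are cut out by strict inequalities, so level sets need not converge under $F_N\Rightarrow F$, and one must show that the boundary points where strict and non-strict level sets disagree form a null set. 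I would try to control this by choosing the approximation monotone, so that $H\ZI_{F_N}\to H\ZI_F$ pointwise off $\overline F$, by leaning on the robustness of the distributional identity for $H\ZI_F$ (whose law on $\ZR\setminus F$ depends only on $|F|$) for the measure statement, and by reserving the delicate interior argument for the inclusion $F\subset F^*$. Keeping track of these boundary effects, rather than the clean algebra of the finite-open case, is where the real work lies.
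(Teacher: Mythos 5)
Your finite-open case is correct and is essentially the paper's Lemma 1 in different clothing: the product $\prod_j\frac{x-a_j}{x-b_j}$, the factorisation $\prod_j(x-a_j)-\sigma\prod_j(x-b_j)=(1-\sigma)\prod_j(x-c_j)$ obtained from B\'ezout, the comparison of $x^{n-1}$ coefficients to get $|E|=(e^\lambda-1)|F|$, and the evaluation at $x=b_i$ together with monotonicity of $H\ZI_E$ on $(a_i,c_{i+1})$ to force $F^*=F$ are all exactly the steps in \e{b9}--\e{b41}. The problem is everything after that. The content of Theorem \ref{T3} beyond Lemma \ref{L1} is the passage to open and to general measurable $F$, and your proposal stops precisely there: you correctly diagnose the two obstructions (strict sublevel sets need not be stable under $F_N\Rightarrow F$, and limits degrade the strict inequality $H\ZI_E>\lambda$ to $\ge\lambda$), you name candidate tools, and then you state that this ``is where the real work lies'' without carrying it out. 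Note also that even your measure identity does not ``survive directly'': $|E_N|=(e^\lambda-1)|F_N|\to(e^\lambda-1)|F|$ is useless until you know $|E_N|\to|E|$, which is exactly the unproven convergence $E_N\Rightarrow E$. So the part of the theorem that is not already Lemma \ref{L1} is left as an acknowledged gap.

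For comparison, here is how the paper closes both holes. For measurable $F$ it takes finite-open $F_n\Rightarrow F$ and uses the isometry $\|H\ZI_{F_n}-H\ZI_F\|_2=\|\ZI_{F_n}-\ZI_F\|_2\to0$; convergence in measure of $H\ZI_{F_n}$, combined with the fact that the level set $\{H\ZI_F=\mu\}$ is null (your observation that the distribution of $H\ZI_F$ off $F$ depends only on $|F|$, i.e.\ Stein--Weiss, is indeed the cleanest way to justify this), yields $E_n\Rightarrow E$; one more round of the same argument gives $F_n=F_n^*\Rightarrow F^*$, whence $F\sim F^*$ and $|E|=\lim_n|E_n|=(e^\lambda-1)|F|$. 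For open $F$ it takes the monotone exhaustion $F_n=\bigcup_{k\le n}(a_k,b_k)$ and, for $x\in F$ lying in the component $(a_{n(x)},b_{n(x)})$, evaluates at a shifted point $x+\delta$ in the same component: $H\ZI_{E_n}(x+\delta)>\lambda$ for $n\ge n(x)$ passes to $H\ZI_E(x+\delta)\ge\lambda$ in the limit, and the strict decrease of $H\ZI_E$ on that component (which is disjoint from $E$) then gives $H\ZI_E(x)>H\ZI_E(x+\delta)\ge\lambda$, i.e.\ $x\in F^*$. This $\delta$-shift is precisely the ``upgrade'' your sketch calls for and does not supply; without it, or something equivalent, the inclusion $F\subset F^*$ and hence the whole general statement remains unproved.
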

We shall often use the following property of the Hilbert transform. Namely, if for $f\in L^1(\ZR)$ vanishes on $(a,b)$, then $Hf(x)$ is decreasing on $(a,b)$. Consider numbers
\begin{equation}\label{b6}
c_k<a_k<b_k<c_{k+1},\quad k=0,1,2,\ldots,n+1
\end{equation}
where
\begin{equation*}
c_0=a_0=b_0=-\infty,\quad c_{n+1}=a_{n+1}=b_{n+1}=+\infty.
\end{equation*}
Denote
\begin{align}
&F=\bigcup_{k=1}^n(a_k,b_k),\label{b7}\\
&E=\bigcup_{k=1}^n(c_k,a_k).\label{b44}
\end{align}
Simple calculations shows that 
\begin{align}
H\ZI_F(x)=\frac{1}{\pi}\sum_{k=1}^n\int_{a_k}^{b_k}\frac{dt}{x-t}=\frac{1}{\pi}\sum_{k=1}^n\ln \left|\frac{x-a_k}{x-b_k}\right|,\label{b1}\\
H\ZI_E(x)=\frac{1}{\pi}\sum_{k=1}^n\int_{c_k}^{a_k}\frac{dt}{x-t}=\frac{1}{\pi}\sum_{k=1}^n\ln \left|\frac{x-c_k}{x-a_k}\right|
\end{align}
for any
\begin{equation}
x\in \ZR'=\ZR\setminus \{a_k,b_k:\,k=1,2,\ldots,n\}.
\end{equation}
Observe that the function \e{b1} is decreasing on each interval $(b_k,a_{k+1})$ and increasing on $(a_k,b_k)$, $k=0,1,\ldots,n$. Besides we have
\begin{align}
&\lim_{x\to a_k}H\ZI_F(x)=-\infty,\quad \lim_{x\to b_k}H\ZI_F(x)=+\infty,\quad k=1,2,\ldots,n,\label{z51}\\
&\lim_{x\to \pm\infty }H\ZI_F(x)=0.\label{z52}
\end{align}

The following lemma is the case of \trm {T3} when $F$ is finite-open.
\begin{lemma}\label{L1} Let the numbers $\lambda>0$, $\mu<0$ satisfy \e {z1}. 
If $F$ is an open set of the form \e{b7} with $b_{k-1}<a_k$, $k=2,3,\ldots n$, then the set
\begin{equation*}
E=\left\{x\in \ZR'\setminus F:\, H\ZI_{F}(x)<\mu\right\},\label{b5}
\end{equation*}
has the form \e{b44}, where $c_k$ satisfy the relation \e{b6} and we have
\begin{align}
&F=\{x\in \ZR'\setminus E:\, \H\ZI_E(x)> \lambda \},\label{b40}\\
&|E|=(e^\lambda-1)|F|.\label{b41}
\end{align}
\end{lemma}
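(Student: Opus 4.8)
The plan is to read the set $E$ off the graph of $H\ZI_F$ and then to compress the whole construction into a single polynomial identity relating the endpoints $a_k$, $b_k$, $c_k$.

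First I would locate the intervals of $E$. On the leftmost gap $(-\infty,a_1)$ the function \e{b1} decreases from $0$ to $-\infty$ by \e{z52} and \e{z51}, on each middle gap $(b_{k-1},a_k)$ it decreases from $+\infty$ to $-\infty$, and on the rightmost gap $(b_n,+\infty)$ it decreases from $+\infty$ to $0$. Since $\mu<0$, the equation $H\ZI_F(x)=\mu$ has exactly one root $c_k$ in each of the first $n$ gaps and none in the last, and $\{x\in(b_{k-1},a_k):H\ZI_F(x)<\mu\}=(c_k,a_k)$. Hence $E=\bigcup_{k=1}^n(c_k,a_k)$ has the form \e{b44}, and $c_k\in(b_{k-1},a_k)$ forces the interlacing $c_k<a_k<b_k<c_{k+1}$ of \e{b6}.

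The heart of the proof is an algebraic identity. I introduce the monic degree-$n$ polynomials $A(z)=\prod_{k=1}^n(z-a_k)$, $B(z)=\prod_{k=1}^n(z-b_k)$ and $C(z)=\prod_{k=1}^n(z-c_k)$, so that by \e{b1} one has $e^{\pi H\ZI_F(x)}=|A(x)/B(x)|$ and $e^{\pi H\ZI_E(x)}=|C(x)/A(x)|$. Exponentiating $H\ZI_F(c_j)=\mu$ and using \e{z1} gives $|A(c_j)/B(c_j)|=1-e^{-\lambda}$ for every $j$; the interlacing shows $A(c_j)$ and $B(c_j)$ have the same sign, so in fact $A(c_j)=(1-e^{-\lambda})B(c_j)$. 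Consequently the polynomial $A-(1-e^{-\lambda})B$, which has leading coefficient $e^{-\lambda}\neq0$ and therefore degree exactly $n$, vanishes at the $n$ distinct points $c_1,\dots,c_n$ and must coincide with $e^{-\lambda}C$; that is,
\begin{equation*}
C(z)=e^{\lambda}A(z)-(e^{\lambda}-1)B(z).
\end{equation*}
I expect this linearization of the $n$ transcendental conditions $H\ZI_F(c_j)=\mu$ into one polynomial identity, together with the sign bookkeeping needed to drop the absolute values, to be the main obstacle.

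Granting the identity, both conclusions follow quickly. Evaluating it at $z=b_j$, where $B(b_j)=0$, yields $C(b_j)=e^{\lambda}A(b_j)$, a positive ratio by the same sign count; by the calibration \e{z1} this places $b_j$ exactly at the point of its gap where $H\ZI_E$ meets the threshold in \e{b40}. Now $E=\bigcup(c_k,a_k)$ has gaps $(-\infty,c_1)$ and $(a_k,c_{k+1})$ for $k=1,\dots,n$ (with $c_{n+1}=+\infty$), and on the leftmost one $H\ZI_E\le 0<\lambda$, while on each gap $(a_k,c_{k+1})$ it decreases continuously from $+\infty$, crossing $\lambda$ exactly once, namely at $b_k$. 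Thus $\{x:H\ZI_E(x)>\lambda\}=\bigcup_{k=1}^n(a_k,b_k)=F$, which is \e{b40}. Finally, comparing the coefficients of $z^{n-1}$ in the displayed identity gives $\sum_k c_k=e^{\lambda}\sum_k a_k-(e^{\lambda}-1)\sum_k b_k$, so that $|E|=\sum_k(a_k-c_k)=(e^{\lambda}-1)\sum_k(b_k-a_k)=(e^{\lambda}-1)|F|$, establishing \e{b41}.
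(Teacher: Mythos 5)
Your proof is correct and follows essentially the same route as the paper's: the same factor-theorem (B\'ezout) identity $C=e^{\lambda}A-(e^{\lambda}-1)B$ extracted from the conditions $H\ZI_{F}(c_j)=\mu$, evaluated at the points $b_j$ to obtain \e{b40} via monotonicity, and compared at the $x^{n-1}$ coefficient to obtain \e{b41}; if anything you are more explicit than the paper about the sign bookkeeping and the location of the roots $c_k$. (Both arguments inherit the same constant slip in \e{z1}: since $H\ZI_E=\frac{1}{\pi}\ln|C/A|$, the identity actually gives $H\ZI_E(b_j)=\lambda/\pi$, so either the threshold in \e{b40} should read $\lambda/\pi$ or $\mu$ should be taken as $\frac{1}{\pi}\ln\left(1-e^{-\pi\lambda}\right)$; this does not affect the structure of either proof.)
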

\begin{proof}
From \e {z51}, \e {z52} and monotonicity of $H\ZI_F$ on $(b_{k-1},a_{k})$ it follows that given $\lambda>0$ uniquely determines numbers $c_k$ such that
\begin{equation*}
H\ZI_F(c_k)=\mu= \frac{1}{\pi}\ln (1-e^{-\lambda}),\,c_k\in (b_{k-1},a_{k}),\, k=1,2,\ldots,n,
\end{equation*}
and 
\begin{equation*}
E=\bigcup_{k=1}^n(c_k,a_k)=\left\{x\in \ZR'\setminus F:\, H\ZI_{F}(x)<\mu\right\}.
\end{equation*}
Equality \e{b1} implies that the numbers $c_k$ are the roots of the equation
\begin{equation*}\label{a4}
\frac{1}{\pi}\sum_{k=1}^n\ln \left|\frac{x-a_k}{x-b_k}\right|=\mu.
\end{equation*}
From \e{b6} it follows that $(c_j-b_k)(c_j-a_k)>0$ for any $j=1,2,\ldots, n$. Thus we conclude that $c_k$ are the roots of the algebraic equation
\begin{equation}\label{b9}
e^{\pi \mu} \prod_{k=1}^n(x-b_k)-\prod_{k=1}^n(x-a_k)=0,
\end{equation}
and according to B\'{e}zout's theorem we have
\begin{equation}\label{b45}
e^{\pi \mu} \prod_{k=1}^n(x-b_k)-\prod_{k=1}^n(x-a_k)=(e^{\pi \mu}-1)\prod_{k=1}^n(x-c_k).
\end{equation}
This implies that $b_k$ are the roots of the equation
\begin{equation*}
(e^{\pi \mu}-1)\prod_{k=1}^n(x-c_k)+\prod_{k=1}^n(x-a_k)=0
\end{equation*}
and therefore for
\begin{equation*}
\sum_{k=1}^n\ln \left|\frac{x-c_k}{x-a_k}\right|=-\ln(1-e^{\pi \mu})=\lambda.
\end{equation*}
That means
\begin{equation*}
H\ZI_E(b_k)=\lambda,\quad k=1,2,\ldots,n.
\end{equation*}
Thus, since $H\ZI_E$ is decreasing on $(a_k,c_{k+1})$ and $b_k\in (a_k,c_{k+1})$, we get \e {b40}.
Equality of the $x^{n-1}$ coefficients of the right and left sides of \e {b45} gives 
\begin{equation*}
\sum_{k=1}^nc_k=\frac{e^{\pi \mu}\sum_{k=1}^nb_k-\sum_{k=1}^na_k}{e^{\pi \mu}-1}.
\end{equation*}
This implies 
\begin{equation*}
|E|=\sum_{k=1}^n(a_k-c_k)=\frac{e^{\pi \mu}\sum_{k=1}^n(b_k-a_k)}{1-e^{\pi \mu}}=\frac{e^{\pi \mu}|F|}{1-e^{\pi \mu}}=(e^\lambda-1)|F|
\end{equation*}
and so we get \e {b41}.
\end{proof}

\begin{proof}[Proof of \trm {T3}]
Since $F$ is measurable, there exists a sequence of finite-open sets $F_n$ such that
\begin{equation}\label{b43}
F_n\Rightarrow F.
\end{equation}
Applying \lem{L1}, we may find finite-open sets $E_n$ such that
\begin{align}
&F_n=\{x\in \ZR'\setminus E_n:\, H\ZI_{E_n}(x)>\lambda\},\label{b10}\\
&E_n=\left\{x\in \ZR'\setminus F_n:\, H\ZI_{F_n}(x)<\mu\right\}\label{b11},\\
&|E_n|=(e^\lambda-1)|F_n|.
\end{align}
We have
\begin{equation}\label{b12}
\|H\ZI_{F_n}-H\ZI_F\|_2=\|\ZI_{F_n}-\ZI_F\|_2\to 0.
\end{equation}
Taking into account the monotonicity property of function $H\ZI_F(x)$, from \e{b11} and \e{b12} one can easily get
\begin{equation}\label{b13}
E_n\Rightarrow E,
\end{equation}
where $E$ is defined in \e{b38}. By the same reason \e {b13} implies $H\ZI_{E_n}(x)\Rightarrow H\ZI_E(x)$ and therefore we get the relation $F_n\Rightarrow F^*$, which together with \e{b43} gives us $F\sim  F^*$.
From \e{b12}, \e{b13} and relation \e {b41} between $E_n$ and $F_n$ we get
\begin{equation}\label{b42}
|E|=\lim_{n\to\infty} |E_n|=(e^\lambda-1)\lim_{n\to\infty} |F_n|=(e^\lambda-1)|F|.
\end{equation}
If $F$ is open, then
\begin{equation*}
F=\bigcup_{k=1}^\infty(a_k,b_k),
\end{equation*}
where the intervals $(a_k,b_k)$ are pairwise disjoint. Denote
\begin{equation*}
F_n=\bigcup_{k=1}^n(a_k,b_k).
\end{equation*}
Take an arbitrary point $x\in F$.  The points $x$ and  $x+\delta$ are in the same component interval $(a_{n(x)},b_{n(x)})$ for small enough $\delta>0$. So we have $x\in F_n$ for $n\ge n(x)$ and therefore by \e{b10} we conclude
\begin{equation}\label{b39}
H\ZI_{E_n}(x+\delta)>\lambda,\quad n>n(x).
\end{equation}
Since  $H\ZI_{E}(x)$ is decreasing in $(a_{n(x)},b_{n(x)})$, from \e{b39} we get
\begin{equation*}
H\ZI_{E}(x)>H\ZI_{E}(x+\delta)=\lim_{n\to\infty}H\ZI_{E_n}(x+\delta)\ge\lambda.
\end{equation*}
This implies $x\in F^*$ and therefore we get $F\subset F^*$.
\end{proof}

\section{Proofs of main theorems}
Let $G\subset \ZR$ be an open set.  To any component interval $(a,b)$ of $G$ we associate the intervals
\begin{equation*}
\left[a+\frac{ b-a}{2^{j+1}},a+ \frac{b-a}{2^{j}} \right),\,\left[b-\frac{ b-a}{2^{j}},b- \frac{b-a}{2^{j+1}} \right),\quad  j=1,2,\ldots,
\end{equation*}
We denote by $\{I_k\}$ the family of all these intervals. It gives a Withney partition of the set $G$. Observe that each $I_k$ has two adjacent intervals $I_k^+$ and $I_k^-$. We denote
\begin{equation*}
I_k^*=I_k\cup I_k^+\cup I_k^-.
\end{equation*}
We have
\begin{align}
&G=\bigcup_{k=1}^\infty I_k,\label{b31}\\
&\dist(I_k,G^c)=|I_k|.\label{b32}\\
&\dist(I_j,I_k)\ge |I_j|/2,\text { if } I_j\cap I_k^*=\varnothing.\label{b37}
\end{align}
In the proof of the next lemma we use Stein-Weiss \cite{StWe} well known identity. That is for any set $E\subset \ZR$ of finite measure we have
\begin{equation*}
|\{x\in\ZR:\, |H\ZI_E(x)|>\lambda\}|= \frac{4e^{\pi\lambda}|E|}{e^{2\pi\lambda}-1},\quad \lambda>0.
\end{equation*}
\begin{lemma}\label{L2}
Let $G$ be an open set with Withney partition $\{I_k\}$ and let $e\subset G$ be a null set. Then for any $\gamma>0$ and a sequence of numbers $\delta_k>0$ there exists an open set $F$ with $e\subset F\subset G$ such that
\begin{align}
&\{x\in \ZR:\, |H\ZI_F(x)|>\gamma\}\subset G,\label{z2}\\
&|I_k\cap \{x\in \ZR:\, |H\ZI_F(x)|>\gamma\}|\le \delta_k,\quad k=1,2,\ldots .\label{b47}
	\end{align}
If the set $e$ additionally is compact, then $F$ can be taken to be finite.
\end{lemma}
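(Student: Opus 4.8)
The plan is to construct $F$ cube-by-cube along the Whitney partition. Writing $e=\bigcup_k(e\cap I_k)$, for each $k$ I would cover the null set $e\cap I_k$ by an open set $F_k\subset I_k$ of prescribed small measure $\rho_k=|F_k|$ (the at most countably many cube endpoints that might lie in $e$ form a null set and can be absorbed into this cover without affecting the estimates below), and set $F=\bigcup_k F_k$. Then $F$ is open with $e\subset F\subset G$, and by keeping $\sum_k\rho_k<\infty$ we ensure $\ZI_F\in L^1(\ZR)$. The whole argument then reduces to choosing the weights $\rho_k$ so that both conclusions hold, and everything rests on pointwise estimation of $H\ZI_F=\sum_k H\ZI_{F_k}$ away from the supports.

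For \e{z2} I would fix $x\in G^c$ and use that $F_k\subset I_k$ together with \e{b32}: $\dist(x,F_k)\ge\dist(I_k,G^c)=|I_k|$. Since $x$ lies outside every $F_k$, this gives $|H\ZI_{F_k}(x)|\le\frac1\pi\,|F_k|/\dist(x,F_k)\le\frac1\pi\,\rho_k/|I_k|$, a bound that no longer depends on $x$. Summing, $|H\ZI_F(x)|\le\frac1\pi\sum_k\rho_k/|I_k|$ for every $x\in G^c$, so \e{z2} will follow from the single requirement $\sum_k\rho_k/|I_k|\le\pi\gamma/4$. This collapse of the $x$-dependence, coming straight from the defining Whitney property \e{b32}, is what makes the uniform containment in $G$ possible.

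For \e{b47} I would, for each $k$, split $F=A_k\cup B_k$ into a local part $A_k=\bigcup\{F_j:\,I_j\cap I_k^*\ne\varnothing\}$ and a far part $B_k=F\setminus A_k$; since the cubes are disjoint, $A_k$ consists of at most three of the $F_j$. For $x\in I_k$ and $I_j\cap I_k^*=\varnothing$, property \e{b37} gives $\dist(x,F_j)\ge\dist(I_k,I_j)\ge|I_j|/2$, so $|H\ZI_{B_k}(x)|\le\frac2\pi\sum_j\rho_j/|I_j|\le\gamma/2$ under the same requirement. Hence on $I_k$ the event $|H\ZI_F|>\gamma$ forces $|H\ZI_{A_k}|>\gamma/2$, and the Stein--Weiss identity bounds $|\{|H\ZI_{A_k}|>\gamma/2\}|=\frac{4e^{\pi\gamma/2}}{e^{\pi\gamma}-1}|A_k|$. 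Thus \e{b47} will hold as soon as $\sum_{j:\,I_j\cap I_k^*\ne\varnothing}\rho_j\le\frac{e^{\pi\gamma}-1}{4e^{\pi\gamma/2}}\,\delta_k=:\delta_k'$ for every $k$.

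It remains to choose the $\rho_k$, and I expect this bookkeeping to be the crux. Both demands are upper bounds on the weights: one global convergent-sum constraint for \e{z2}, and for \e{b47} a family of constraints each involving only a cube and its two neighbours, so each index $j$ enters at most three of them. Taking $\rho_j$ to be the minimum of a fixed summable majorant (realizing $\sum_k\rho_k/|I_k|\le\pi\gamma/4$ and $\sum_k\rho_k<\infty$) and $\frac13\min\{\delta_k':\,I_j\cap I_k^*\ne\varnothing\}$ produces positive weights meeting every constraint at once, and shrinking any $\rho_j$ only helps. The delicate point is precisely reconciling the one global bound needed for \e{z2} with the individual per-cube bounds needed for \e{b47} through a single family $\{F_k\}$. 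Finally, if $e$ is compact then $\dist(e,G^c)>0$, and \e{b32} forces every cube meeting $e$ to have length $\ge\frac12\dist(e,G^c)$; such cubes lie in a bounded region, so only finitely many of them exist and only finitely many $F_k$ are nonempty. Choosing each of these as a finite union of intervals (possible by compactness of $e\cap I_k$) makes $F$ finite-open, giving the last assertion.
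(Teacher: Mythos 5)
Your proposal is correct and follows essentially the same route as the paper: an open cover of $e$ made small on each $I_j^*$, the Whitney distance properties \e{b32} and \e{b37} to control the far contributions uniformly, and the Stein--Weiss identity applied to the at most three nearby pieces to get the per-cube measure bound \e{b47}. The only differences are organizational (explicit weights $\rho_k$ versus the paper's direct smallness condition on $|F\cap I_j^*|$), and you supply more detail than the paper on the finitely-many-cubes argument in the compact case.
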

\begin{proof}

We define $F$ to be an open set satisfying
\begin{equation}\label{b20}
|F\cap I_j^*|<\min\left\{\frac{\pi\gamma|I_j|}{2^{j+2}},\frac{\delta_j(e^{\pi\gamma}-1)}{4e^{\pi\gamma/2}}\right\},\quad j=1,2,\ldots.
\end{equation}
If $e$ is compact, then clearly $F$ can be finite. If $x\in \ZR\setminus G$, then $\dist (x,I_j)\ge |I_j|$. Thus we get
\begin{align*}
|H\ZI_F(x)|&\le \left|\sum_{j=1}^\infty H\ZI_{F\cap I_j} (x)\right|\le \frac{1}{\pi}\sum_{j=1}^\infty\left|\int_{F\cap I_j}\frac{dt}{x-t}\right|\\
&\le \frac{1}{\pi}\sum_{j=1}^\infty\frac{|F\cap I_j|}{\dist(x,I_j)}\le  \frac{1}{\pi}\sum_{j=1}^\infty\frac{|F\cap I_j|}{|I_j|}<\gamma
\end{align*}
and so \e {z2}.
Then, using \e{b37}, for any $x\in I_k$ we get
\begin{align}\label{b36}
\left|\sum_{j:\,I_j\cap I_k^*=\varnothing}H\ZI_{F\cap I_j} (x)\right|&\le \frac{1}{\pi}\sum_{j:\,I_j\cap I_k^*=\varnothing}\left|\int_{F\cap I_j}\frac{dt}{x-t}\right|\\
&\le \frac{1}{\pi}\sum_{j:\,I_j\cap I_k^*=\varnothing}\frac{|F\cap I_j|}{\dist(I_j,I_k)}\nonumber\\
&\le \frac{1}{\pi} \sum_{j=1}^\infty\frac{2|F\cap I_j|}{|I_j|}<\frac{\gamma}{2}.\nonumber
\end{align}
If $I_j\cap I_k^*\neq\varnothing$, then $I_j$ coincides with one of the intervals $I_k$, $I_k^+$ or $I_k^-$. Thus by the Stein-Weiss inequality we get
\begin{align}\label{b35}
\bigg|\bigg\{x\in \ZR:\, \bigg|\sum_{j:\,I_j\cap I_k^*\neq\varnothing} &H\ZI_{F\cap I_j}(x)\bigg|>\gamma/2\bigg\}\bigg|\\
&=\bigg|\bigg\{x\in \ZR:\, \bigg|H\ZI_{F\cap I_k^*}(x)\bigg|>\gamma/2\bigg\}\bigg|\\
&\le\frac{4e^{\pi\gamma/2}}{e^{\pi\gamma}-1}|F\cap I_k^*|\le\delta_k. \nonumber
\end{align}
From \e{b36} and \e{b35} we obtain
\begin{align*}
&|\{x\in I_k:\, |H\ZI_F(x)|>\gamma\}|\\
&\qquad\le\left|\left\{x\in I_k:\,\left|\sum_{j:\,I_j\cap I_k^*=\varnothing}H\ZI_{F\cap I_j} (x)\right|>\gamma/2\right\}\right|\\
&\qquad\quad +\left|\left\{x\in I_k:\,\left|\sum_{j:\,I_j\cap I_k^*\neq\varnothing}H\ZI_{F\cap I_j} (x)\right|> \gamma/2\right\}\right|\\
&\qquad\le\left\{x\in \ZR:\,\left|\sum_{j:\,I_j\cap I_k^*\neq\varnothing}H\ZI_{F\cap I_j} (x)\right|>\gamma/2\right\}\le \delta_k.
\end{align*}
\end{proof}
\begin{lemma}\label{L3}
Let $G$ be an open set and  $e\subset G$ be a null set. Then for any $\delta>0$ and $\lambda>0$, $\mu<0$, satisfying \e {z1}, there exists an open set $F$ such that
\begin{align}
&e\subset F\subset G,\label{b18}\\
&E=\{x\in \ZR\setminus F:\, H\ZI_F(x)<\mu\}\subset G\label{z3}\\
&F\subset \{x\in\ZR\setminus F:\,H\ZI_E(x)> \lambda\},\label{b26}
\end{align}
and
\begin{equation}
\frac{1}{\pi}\int_{|t-x|>\varepsilon} \frac{\ZI_E(t)}{|x-t|}dt<\delta,\label{b19}
\end{equation}
whenever 
\begin{equation}\label{b21}
(x-\varepsilon,x+\varepsilon)\not\subset G.
\end{equation}
If $e$ is compact, then $F$ is finite-open.
\end{lemma}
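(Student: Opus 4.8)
The plan is to read off almost everything from \lem{L2} and \trm{T3}, isolating the truncated integral bound \e{b19} as the only step that needs a genuine idea. First I would set $\gamma=-\mu=|\mu|>0$ (legitimate since $\mu<0$) and fix the sequence $\delta_k=\pi\delta|I_k|/2^{k+2}$, then apply \lem{L2} to the open set $G$ and the null set $e$ with these data. This directly produces an open set $F$ (finite-open when $e$ is compact, by the last assertion of \lem{L2}) with $e\subset F\subset G$, which is \e{b18}, and satisfying $\{x\in\ZR:\,|H\ZI_F(x)|>\gamma\}\subset G$ together with the local bound $|I_k\cap\{|H\ZI_F|>\gamma\}|\le\delta_k$ from \e{b47}.

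Next I would verify \e{z3} and \e{b26}. Every $x\in E=\{x\in\ZR\setminus F:\,H\ZI_F(x)<\mu\}$ has $H\ZI_F(x)<\mu<0$, hence $|H\ZI_F(x)|>-\mu=\gamma$; consequently $E\subset\{|H\ZI_F|>\gamma\}\subset G$, which is \e{z3}, and at the same time $|E\cap I_k|\le|I_k\cap\{|H\ZI_F|>\gamma\}|\le\delta_k$ for every $k$. Since this $E$ is exactly the set \e{b38} of \trm{T3} and $F$ is open, \trm{T3} yields $F\subset F^*$, which is the inclusion \e{b26} (and $F=F^*$ in the finite-open case when $e$ is compact).

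The heart of the matter is \e{b19}, and the key observation is that hypothesis \e{b21}, i.e. $(x-\varepsilon,x+\varepsilon)\not\subset G$, provides a point $y\in G^c$ with $|x-y|<\varepsilon$. For any $t$ with $|t-x|>\varepsilon$ one then has $|t-y|\le|t-x|+|x-y|<2|t-x|$, so $|t-x|>\tfrac12|t-y|$; and if moreover $t\in E\cap I_k$, the Whitney property \e{b32}, namely $\dist(I_k,G^c)=|I_k|$, forces $|t-y|\ge|I_k|$. Combining these, $1/|x-t|\le 2/|t-y|\le 2/|I_k|$ on $E\cap I_k\cap\{|t-x|>\varepsilon\}$, and using $E=\bigcup_k(E\cap I_k)$ from \e{b31} one gets
\begin{equation*}
\frac1\pi\int_{|t-x|>\varepsilon}\frac{\ZI_E(t)}{|x-t|}\,dt\le\frac1\pi\sum_{k=1}^\infty\frac{2|E\cap I_k|}{|I_k|}\le\frac1\pi\sum_{k=1}^\infty\frac{2\delta_k}{|I_k|}=\frac\delta2<\delta,
\end{equation*}
uniformly over all admissible pairs $(x,\varepsilon)$, which is \e{b19}.

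The step I expect to be the main obstacle is precisely this last estimate. The difficulty is that the centre $x$ and the truncation level $\varepsilon$ are arbitrary, so that bounding $1/|x-t|$ by $1/\dist(x,I_k)$ is worthless when $x$ lies deep inside $G$; replacing the centre $x$ by the boundary point $y$ is the device that makes the bound uniform and decouples it from $(x,\varepsilon)$. Everything else is bookkeeping: the geometric factor $2^{-(k+2)}$ in $\delta_k$ guarantees the series $\sum_k\delta_k/|I_k|$ converges and stays below the target, and one only has to note that this $\delta_k$ is an admissible input for the arbitrary sequence allowed in \lem{L2}.
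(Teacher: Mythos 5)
Your proposal is correct and follows essentially the same route as the paper: apply \lem{L2} with $\gamma=|\mu|$ and a geometrically decaying sequence $\delta_k$, deduce \e{z3} from \e{z2}, get \e{b26} from \trm{T3} since $F$ is open, and obtain \e{b19} from the pointwise bound $1/|x-t|\le 2/|I_k|$ on $E\cap I_k$ together with $|E\cap I_k|\le\delta_k$. The only (cosmetic) difference is that you justify that pointwise bound in one stroke via a boundary point $y\in G^c\cap(x-\varepsilon,x+\varepsilon)$ and the triangle inequality, whereas the paper splits into the two cases $I_k\cap(x-\varepsilon,x+\varepsilon)=\varnothing$ and $\neq\varnothing$; both yield the same series estimate.
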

\begin{proof}
Let $I_k=[a_k,b_k)$ be a Withney partition of $G$ defined above. Applying \lem{L2}, we find an open set $F$ satisfying  \e {b18}, \e {z2} and \e{b47} for the numbers
\begin{equation*}
\gamma=|\mu|,\quad \delta_k=\frac{\pi\delta|I_k|}{2^{k+1}}.
\end{equation*}
Such that
\begin{equation*}
E\subset \{x\in \ZR\setminus F:\, |H\ZI_F(x)|>\gamma\},
\end{equation*}
from \e {z2} implies \e {b18}.
Since $F$ is open, from \trm {T1} it follows that
\begin{equation*}
F\subset F^*=\{x\in \ZR\setminus E:\, \H\ZI_E(x)> \lambda \},
\end{equation*}
which implies \e {b26}. From \e{b47} we get
\begin{equation*}
|E\cap I_k|\le \delta_k=\frac{\delta|I_k|}{2^{k+1}}.
\end{equation*}
Take an arbitrary $x\in \ZR$ and $\varepsilon>0$ satisfying \e{b21}. We claim that
\begin{equation}\label{b25}
a_k(x)=\frac{1}{\pi}\int_{|t-x|>\varepsilon} \frac{\ZI_{E\cap I_k}(t)}{|x-t|}dt\le \frac{\delta}{2^{k}}.
\end{equation}
Indeed, if
\begin{equation*}
 I_k\cap(x-\varepsilon,x+\varepsilon)=\varnothing,
\end{equation*}
then from \e{b21} and \e{b32} one can easily get  $\dist(x,I_k)\ge|I_k|/2$ and therefore 
\begin{equation*}
a_k(x)\le \frac{|E\cap I_k|}{\pi\dist(x,I_k)}\le \frac{2|E\cap I_k|}{\pi|I_k|}\le \frac{\delta}{2^{k}}.
\end{equation*}
In the case
\begin{equation*}
 I_k\cap(x-\varepsilon,x+\varepsilon)\neq\varnothing,
\end{equation*}
again taking into account of \e{b21}, we get $|I_k|\le 2\varepsilon$. Then the bound
\begin{equation*}
a_k(x)\le \frac{|E\cap I_k|}{\pi\varepsilon} \le  \frac{|E\cap I_k|}{\pi|I_k|/2} \le \frac{\delta}{2^{k}}
\end{equation*}
establishes \e{b25}. Thus, applying \e {b21}, for $x$ satisfying \e {b21} we get
\begin{equation*}
\left|H_\varepsilon\ZI_E(x)\right|\le \sum_{k=1}^\infty \left|H_\varepsilon\ZI_{E\cap I_k}(x)\right|\le \sum_{k=1}^\infty  \frac{\delta}{2^{k}} \le \delta.
\end{equation*}\
\end{proof}
\begin{lemma}\label{L4}
	There exists a function $\varphi \in C(\ZR)$ with $\supp \varphi\subset [-1,1]$, such that $H\varphi(x)$ is finitely defined for any $x\neq 0$ and $H^*\varphi(0)=\infty$.
\end{lemma}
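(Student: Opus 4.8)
The plan is to exhibit $\varphi$ explicitly, using a one-sided bump with a borderline logarithmic profile at the origin. Fix $\chi\in C^\infty(\ZR)$ with $0\le\chi\le1$, $\chi(t)=1$ for $t\le 1/2$ and $\chi(t)=0$ for $t\ge 1$, and set
\[
\varphi(t)=\begin{cases}\dfrac{\chi(t)}{1-\ln t}, & 0<t<1,\\[1mm] 0,& t\le 0 \text{ or } t\ge1.\end{cases}
\]
Since $1-\ln t\to+\infty$ as $t\to0^+$, we have $\varphi(t)\to0$, so $\varphi$ is continuous on $\ZR$ with $\varphi(0)=0$, and plainly $\supp\varphi\subset[0,1]\subset[-1,1]$. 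Moreover $\varphi$ is $C^\infty$ on $\ZR\setminus\{0\}$ (the cutoff $\chi$ removes any trouble at $t=1$), the origin being the only point of non-smoothness.

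Next I would establish $H^*\varphi(0)=\infty$. Because $\varphi$ is supported in $[0,1]$,
\[
H_\varepsilon\varphi(0)=\frac1\pi\int_{|t|>\varepsilon}\frac{\varphi(t)}{-t}\,dt=-\frac1\pi\int_\varepsilon^1\frac{\varphi(t)}{t}\,dt .
\]
Since $\varphi\ge0$ and $\varphi(t)=1/(1-\ln t)$ on $(0,1/2)$, the substitution $u=1-\ln t$, $\tfrac{dt}{t}=-du$, yields
\[
\int_\varepsilon^{1/2}\frac{dt}{t\,(1-\ln t)}=\int_{1+\ln2}^{1-\ln\varepsilon}\frac{du}{u}=\ln\frac{1-\ln\varepsilon}{1+\ln2}\xrightarrow[\varepsilon\to0^+]{}+\infty .
\]
Hence $|H_\varepsilon\varphi(0)|\to\infty$ as $\varepsilon\to0^+$, and therefore $H^*\varphi(0)=\sup_{\varepsilon>0}|H_\varepsilon\varphi(0)|=\infty$. (Symmetrizing $\varphi$ to an odd function would work equally well and merely doubles the divergent integral; supporting it in $[0,1]$ is the most economical choice.)

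Finally I would verify that $H\varphi(x)$ is a finite principal value for every $x\ne0$. Fix such an $x$ and choose $\delta>0$ with $0\notin(x-\delta,x+\delta)$; on this interval $\varphi$ is $C^1$. Using the exact identity $\int_{\varepsilon<|t-x|<\delta}\frac{dt}{x-t}=0$ for a symmetric truncation,
\[
\frac1\pi\int_{\varepsilon<|t-x|<\delta}\frac{\varphi(t)}{x-t}\,dt=\frac1\pi\int_{\varepsilon<|t-x|<\delta}\frac{\varphi(t)-\varphi(x)}{x-t}\,dt ,
\]
whose integrand is bounded by the local Lipschitz constant of $\varphi$; thus the limit as $\varepsilon\to0$ exists and is finite. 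The remaining piece $\frac1\pi\int_{|t-x|\ge\delta}\frac{\varphi(t)}{x-t}\,dt$ converges absolutely, since $\varphi\in L^1$ and the kernel is bounded on that region. Adding the two contributions shows $H\varphi(x)$ is finitely defined for all $x\ne0$.

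The construction is the whole point, and the genuinely delicate step is to reconcile continuity at the origin (which forces $\varphi(0)=0$, hence $\varphi(t)\to0$) with divergence of the weighted integral $\int_0^1\varphi(t)\,dt/t=\infty$. This is precisely the borderline behavior captured by the profile $1/(1-\ln t)$: any faster decay at $0$ would render $H\varphi(0)$ finite, while any slower decay would destroy continuity. Once this balance is struck, the checks of continuity, support, and finiteness away from the origin are routine.
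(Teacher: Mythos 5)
Your construction is correct, but it is genuinely different from the paper's. The paper takes a two-sided, piecewise-linear $\varphi$ equal to $1-x$ on $[0,1]$ and equal to $1-\frac{1}{k+1}$ at the dyadic points $-2^{-k}$ (linearly interpolated in between), so that $\varphi(0)=1$; the divergence of $H_{\varepsilon}\varphi(0)$ then comes from the near-cancellation of two logarithmically divergent contributions from the two sides of the origin, the residue being the harmonic series $\ln 2\sum_k \frac{1}{k+1}$, and finiteness of $H\varphi(x)$ for $x\neq 0$ is immediate from piecewise linearity. You instead take a one-sided profile vanishing at the origin, $\varphi(t)=\chi(t)/(1-\ln t)$ on $(0,1)$, so that no cancellation is involved at all: $|H_{\varepsilon}\varphi(0)|=\frac{1}{\pi}\int_{\varepsilon}^{1}\varphi(t)\,dt/t$ diverges directly by the substitution $u=1-\ln t$, and finiteness away from $0$ follows from the standard local-Lipschitz-plus-$L^1$ criterion for existence of the principal value. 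Your route is arguably more transparent (it isolates the exact borderline decay $1/|\ln t|$ that separates finite from infinite $H^{*}\varphi(0)$ for a continuous function), while the paper's two-sided example additionally illustrates that the phenomenon can occur even when $\varphi$ is bounded away from $0$ near the singular point, i.e.\ the divergence need not be driven by the size of $\int \varphi(t)\,dt/|t|$ alone but can arise from an asymmetry between the two sides. Both proofs are complete and serve the later application in Theorem \ref{T2} equally well.
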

\begin{proof}
 Define $\varphi(x)$ by
		\begin{align}
		\varphi(x)=\left\{
		\begin{array}{llr}
		1-x &\hbox{ if } x\in [0,1 ],\\
		1-1/(k+1)&\hbox { if } x=-2^{-k},\, k=0,1,\ldots ,\\
		\hbox{linear on each interval } &[-2^{-k},-2^{-k-1}),\, k=0,1,\ldots .\\
		\end{array}
		\right.
		\end{align}
One can check that $f$ is continuous. Linearity implies the existence of $H\varphi(x)$ for any $x\neq 0$. Then we have
\begin{align*}
H_{\delta/2^n}\varphi(0)&\le \int_{2^{-n}}^1\frac{(1-t)}{-t}dt+
\sum_{k=1}^n\int_{-2^{-k+1}}^{-2^{-k}}\frac{(1-1/(k+1))}{-t}dx\\
&=-n\ln 2+(1-2^{-n})+\sum_{k=1}^n\left(1-\frac{1}{k+1}\right)\ln 2\\
&=-\ln 2\sum_{k=1}^n\frac{1}{k+1}+1-2^{-n}
\end{align*} 
that means $H^*\varphi(0)=\infty$.
\end{proof}

\begin{proof}[Proof of \trm {T1}]
Let $e\subset \ZR$ be a set of measure zero. Applying \lem{L3} successively (with $\lambda=1$, $\mu=\pi^{-1}\ln (1-e^{-1})$), we find a sequences of open sets $F_n$ such  that
\begin{align}
&F_{n-1}\supset F_{n}\supset e,\label{b50} \\
&E_n=\{x\in \ZR\setminus F_n:\, H\ZI_{F_n}(x)<\mu\}\subset F_{n-1}\setminus F_n,\label{z4}\\
&F_n\subset \{x\in\ZR\setminus E_n:\,H\ZI_{E_n}(x)> 1\},\label{b51}\\
&\int_{|t-x|>\varepsilon} \frac{\ZI_{E_n}(t)}{|x-t|}dt<2^{-n},\text{ if } (x-\varepsilon,x+\varepsilon)\not\subset F_{n-1}.\label{b52}
\end{align} 
Define 
\begin{equation*}
E=\bigcup_{n=1}^\infty E_n.
\end{equation*}
Take an arbitrary point $x\in e$. Since the sets $E_n$ are pairwise disjoint, we can write
\begin{align}
H_\varepsilon \ZI_{E}(x)&=\sum_{n=1}^\infty H_\varepsilon \ZI_{E_n}(x)\label{b53}\\
&=\sum_{(x-\varepsilon,x+\varepsilon)\not\subset F_{n-1}} H_\varepsilon \ZI_{E_n}(x)+\sum_{(x-\varepsilon,x+\varepsilon)\subset F_{n-1}} H_\varepsilon \ZI_{E_n}(x)\nonumber\\
&=A+B.\nonumber
\end{align}
Using \e{b52}, we get
\begin{equation}\label{b54}
|A|\le \sum_{(x-\varepsilon,x+\varepsilon)\not\subset F_{n-1}}\int_{|t-x|>\varepsilon} \frac{\ZI_{E_n}(t)}{|x-t|}dt <\sum_{n=1}^\infty 2^{-n}=1.
\end{equation}
From \e {b50} and \e{b51} we obtain
\begin{equation}\label{b55}
B=\sum_{(x-\varepsilon,x+\varepsilon)\subset F_{n-1}} H\ZI_{E_n}(x)\ge \sum_{(x-\varepsilon,x+\varepsilon)\subset F_{n-1}}1.
\end{equation}
The number of terms in the last sum can be arbitrarily big, if we take $\varepsilon >0$ sufficiently small. So combining \e{b53}-\e{b55}, we get
\begin{equation*}
H^*\ZI_{E}(x)=\infty.
\end{equation*}
\end{proof}

\begin{proof}[Proof of \trm {T2}]
Let us suppose first that $e$ is a compact null set and we have $E\subset [a,b]$. Applying \lem{L3} successively (with $\lambda=2^n$, $\mu=\frac{1}{\pi}\ln(1-e^{-2^n})$ we find a sequences of finite-open sets of the form
\begin{equation*}
F_n=\bigcup_{k=1}^{m_n}(a_k^{(n)},b_k^{(n)}),
\end{equation*}
such  that
\begin{align}
&F_{n-1}\supset F_{n}\supset e,\label{g50} \\
&E_n=\{x\in \ZR\setminus F_n:\, H\ZI_{F_n}(x)<\mu \}\subset F_{n-1}\setminus F_n,\label{z5}\\
&F_n= \{x\in\ZR\setminus E_n:\,H\ZI_{E_n}(x)> \lambda =2^n\}\label{g51}\\
&\int_{|t-x|>\varepsilon} \frac{\ZI_{E_n}(t)}{|x-t|}dt<1\text{ if } (x-\varepsilon,x+\varepsilon)\not\subset F_{n-1}.\label{g52}
\end{align} 
From the finiteness of the open sets $F_n$ we get the finiteness of $E_n$. Thus the equality in \e {g51} will follow from \trm {T3}. From \e{g51} it follows that
\begin{equation}\label{z6}
H\ZI_{E_n}(b_k^{(n)})=2^n.
\end{equation}
Observe that one can choose functions $f_n\in C(\ZR)$ such that
\begin{align}
&0\le f_n\le  \ZI_{E_n},\label{z7}\\
&|Hf_n(b_k^{(n)})-H\ZI_{E_n}(b_k^{(n)})|<1,\quad k=1,2,\ldots,m_n.\label{z8}
\end{align}
Thus, taking into account \e {z6}, we get $Hf_n(b_k^{(n)})>2^n-1$. Then, applying the monotonicity property, we conclude
\begin{equation}\label{z9}
Hf_n(x)>2^n-1,\quad x\in F_n.
\end{equation}
Define
\begin{equation*}
f(x)=\sum_{n=1}^\infty \frac{f_n(x)}{2^n}.
\end{equation*}
One can easily check that $f\in C(\ZR)$. Take an arbitrary point $x\in e$. Using \e {z7}, we can write
\begin{align}
H_\varepsilon f(x)&=\sum_{n=1}^\infty \frac{H_\varepsilon f_n(x)}{2^n}\label{g53}\\
&=\sum_{(x-\varepsilon,x+\varepsilon)\not\subset F_{n-1}} \frac{H_\varepsilon f_n(x)}{2^n}+\sum_{(x-\varepsilon,x+\varepsilon)\subset F_{n-1}} \frac{H_\varepsilon f_n(x)}{2^n}\nonumber\\
&=A+B.\nonumber
\end{align}
Applying \e{g52}, we get
\begin{align}
|A|&\le \sum_{(x-\varepsilon,x+\varepsilon)\not\subset F_{n-1}}\frac{1}{2^n}\int_{|t-x|>\varepsilon} \frac{f_n(t)}{|x-t|}dt\label{g54}\\
&\le \sum_{(x-\varepsilon,x+\varepsilon)\not\subset F_{n-1}}\frac{1}{2^n}\int_{|t-x|>\varepsilon} \frac{\ZI_{E_n}(t)}{|x-t|}dt<\sum_{n=1}^\infty 2^{-n}=1.\nonumber
\end{align}
From \e {z7} and \e {z5} it follows that $\supp f_n\subset F_{n-1}$. Thus, using \e {z9}, we obtain
\begin{equation}\label{g55}
B=\sum_{(x-\varepsilon,x+\varepsilon)\subset F_{n-1}} \frac{Hf_n(x)}{2^n}\ge \sum_{(x-\varepsilon,x+\varepsilon)\subset F_{n-1}}(1-2^{-n}).
\end{equation}
For sufficiently small $\varepsilon >0$ the last sum can be arbitrarily big. So combining \e{g53}-\e{g55}, we get
\begin{equation*}
H^*f(x)=\infty.
\end{equation*}
Now suppose $e\subset \ZR$ is an arbitrary closed set. Take a sequence $x_k\in e$, $k\in \ZZ$,  such that 
\begin{equation*}
x_{k+1}-x_k>1,\quad e=\bigcup_k e\cap [x_k,x_{k+1}]
\end{equation*}
and consider the compact sets $e_k=e\cap [x_k,x_{k+1}]$. For each $k$ we can find a continuous functions $f_k$ such that
\begin{equation*}
H^*f_k(x)=\infty,\quad x\in e_k.
\end{equation*}
Denote 
\begin{equation*}
g(x)=\sum_k \frac{1}{2^k(x_{k+1}-x_k)}f_k(x)\ZI_{[x_k,x_{k+1}]}(x)\in L^1(\ZR).
\end{equation*}
One can easily check that
\begin{equation*}
H^*g(x)=\infty, \quad x\in e\setminus \{x_k\}.
\end{equation*}
Set 
\begin{equation*}
\varepsilon_k=\left\{\begin{array}{lcl}
0&\hbox{ if }& H^*g(x_k)=\infty,\\
1&\hbox{ if }& H^*g(x_k)<\infty.
\end{array}
\right.
\end{equation*}
Define
\begin{equation*}
f(x)=g(x)+\sum_k\varepsilon_k 2^{-k}\lambda(x-x_k),
\end{equation*}
where $\varphi$ satisfies the conditions of \lem {L4}. It is clear that the function satisfies the conditions of \trm {T2}.
\end{proof}
\section{Remark on Kahane-Katznelson divergence theorem}
Kahane-Katznelson constructed a complex valued continuous function whose Fourier series diverges on a given set of zero measure. The proof of this theorem is based on the following lemma, which was proved by methods of analytic functions. We will deduce this lemma from \lem {L1}. We shall use also the following well-known relation between two functions $f\in L^1(\ZT)$, $g\in L^\infty(\ZT)$ (see. \cite{Zyg}, ch. 2, Theorem 4.15) 
\begin{equation}\label{z53}
\lim_{n\to\infty}\int_0^{2\pi}f(x)g(nx)dx=\frac{1}{2\pi}\int_0^{2\pi}f(x)dx\cdot\int_0^{2\pi}g(x)dx.
\end{equation}
\begin{lemma}[Kahane-Katznelson]
If $F\subset \ZT$ is a finite-open set $|F|= \alpha>0$, then there exists a complex trigonometric polynomial $P(x)$ of degree $n$ such that
\begin{equation}\label{z11}
\max_{1\le m\le n}|S_m(x,P)|\ge c \ln \frac{1}{\alpha},\quad x\in F.
\end{equation}
\end{lemma}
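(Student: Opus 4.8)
The plan is to deduce \e{z11} from \lem{L1}, by first producing a set $E$ whose conjugate function exceeds $\lambda\sim\ln(1/\alpha)$ on the whole of $F$, and then realizing that conjugate function as the dominant part of a partial sum through a modulation. I would fix $\lambda>0$ by the requirement $(e^\lambda-1)\alpha\le1$, so that $\lambda\ge\tfrac12\ln(1/\alpha)$ for small $\alpha$, and take $\mu$ as in \e{z1}. Viewing $F$ as a finite-open subset of a bounded interval of $\ZR$, \lem{L1} supplies a finite-open set $E$, disjoint from $F$, with $H\ZI_E(x)>\lambda$ for $x\in F$, with $H\ZI_E$ decreasing to the value $\lambda$ exactly at the right endpoints $b_k$ of the components of $F$, and with $|E|=(e^\lambda-1)\alpha\le1$. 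To transfer this from the line to the circle I would compare the kernels $1/(x-t)$ and $\tfrac12\cot\tfrac{x-t}{2}$, whose difference is bounded on a bounded set; since $|E|$ is bounded, the periodic conjugate function of $\ZI_E$ differs from $H\ZI_E$ by $O(1)$, hence still exceeds $\lambda-O(1)$ on $F$. Because $E\cap F=\varnothing$ we also have $\ZI_E=0$ on $F$.

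Next I would extract this conjugate function from a partial sum by an exact algebraic identity. Put $Q=S_d\ZI_E$ and $P(x)=e^{idx}Q(x)$, a trigonometric polynomial of degree $n=2d$ with nonnegative frequencies. Since $\widehat{e^{id\cdot}Q}(j)=\hat Q(j-d)$, the index $m=d$ gives
$e^{-idx}S_d(x,P)=\sum_{k=-d}^{0}\hat{\ZI_E}(k)e^{ikx}$,
which is precisely the $d$-th partial sum of the nonpositive-frequency projection $\tfrac12\bigl(\ZI_E-iH\ZI_E+\hat{\ZI_E}(0)\bigr)$. Thus $|S_d(x,P)|$ equals the modulus of a partial sum of $\tfrac12(\ZI_E-iH\ZI_E)$ at $x$, and more generally $\max_{1\le m\le n}|S_m(x,P)|$ is the maximal partial sum of this projection. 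On $F$, where $\ZI_E=0$ and $H\ZI_E>\lambda$, this quantity ``wants'' to be comparable to $\tfrac12 H\ZI_E(x)>\tfrac12\lambda\gtrsim\ln(1/\alpha)$, which would give \e{z11}.

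The main obstacle is that \e{z11} is a pointwise bound demanded at every $x\in F$, whereas passing from the full conjugate function $H\ZI_E$ to its partial sums introduces a remainder. Splitting $S_m Q=Q*D_m$ by means of the identity $D_m(s)=\cos(ms)+\sin(ms)\cot\tfrac{s}{2}$, one obtains the conjugate-function term together with an oscillatory term carrying the factor $\cos(m(x-t))$; here the equidistribution relation \e{z53} forces the oscillatory integrals to their bounded mean value, so that the discrepancy between $\max_m|S_m(x,P)|$ and $|H\ZI_E(x)|$ is $O(1)$ rather than of size $\lambda$. The delicate conversion of this averaged estimate into a uniform bound for a single finite degree $n$ is the heart of the matter, and it is sharpest near the right endpoints $b_k$, where \lem{L1} gives only $H\ZI_E\downarrow\lambda$. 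I would resolve this using two features from \lem{L1}: first, each $b_k$ lies at positive distance from $E$, so $H\ZI_E$ is smooth there and its partial sums converge without loss; second, near the left endpoints $a_k$ one has $H\ZI_E\to+\infty$, leaving ample room. For the remaining points I would exploit the freedom, for each $x$, to choose the most favorable index $m$ in $\max_{1\le m\le n}$ — truncating before the singularity when $x$ is close to some $a_k$ — which is exactly the role played by the maximum over partial sums in \e{z11}. Absorbing all the $O(1)$ and factor-$\tfrac12$ losses into the constant then yields $\max_{1\le m\le n}|S_m(x,P)|\ge c\ln(1/\alpha)$ on $F$.
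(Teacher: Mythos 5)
Your opening move coincides with the paper's: apply \lem{L1} with $\lambda\sim\ln(1/\alpha)$ to produce a finite-open $E$ with $H\ZI_E>\lambda$ on $F$ and $|E|$ under control, then pass from the line kernel to the periodic one at a cost of $O(1)$. After that the two arguments diverge, and it is in your half of the road that the gap lies. You propose $P=e^{id\cdot}S_d\ZI_E$ and assert that the discrepancy between $\max_m|S_m(x,P)|$ and $\tfrac12 H\ZI_E(x)$ is $O(1)$ ``because \e{z53} forces the oscillatory integrals to their bounded mean value.'' But \e{z53} is a statement about $\int f(x)g(nx)\,dx$ for \emph{fixed} $f,g$ as $n\to\infty$; it says nothing about the pointwise error $S_m(\widetilde{\ZI_E})(x)-\widetilde{\ZI_E}(x)$, which is what you actually need to bound uniformly for $x\in F$. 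That error is governed by $\dist(x,\partial E)$, and here $\overline E$ and $\overline F$ \emph{touch}: in \lem{L1} the components of $E$ are $(c_k,a_k)$ and those of $F$ are $(a_k,b_k)$, so points of $F$ come arbitrarily close to the jumps of $\ZI_E$ at $a_k$. Near $a_k$ the partial sums of the conjugate function are not uniformly close to $H\ZI_E$, and your remedy (``choose the most favorable index $m$, truncating before the singularity'') is precisely the hard quantitative step, which you name but do not carry out. You yourself flag this as ``the heart of the matter''; as written, the proof stops there.

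It is worth seeing how the paper sidesteps both difficulties. First, it shrinks $E$ to $\tilde E=\bigcup(c_k+\delta,a_k-\delta)$, so that $\dist(\tilde E,F)>0$ while $H\ZI_{\tilde E}>\lambda$ still holds on $F$ by continuity; this is exactly the separation your construction lacks. Second, instead of modulating a partial sum of $\ZI_E$, it builds the test function $f_m=\ZI_{\tilde E}\,\sign(\sin m\cdot)$, so that in the modified partial sum $S_m^*(x,f_m)$ the product $\sin mt\cdot\sign(\sin mt)=|\sin mt|$ has nonzero mean and \e{z53} (now legitimately applied, with $m\to\infty$ and fixed $\ZI_{\tilde E}(t)/(x-t)$, uniformly on $F$ thanks to the positive distance) makes the conjugate-function term emerge with a factor $|\sin mx|$; a companion polynomial with $|\cos mx|$ and the combination $P=f+ig$ remove the zeros of $\sin mx$. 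This yields the uniform lower bound at a single large $m$ with no endpoint analysis at all. If you want to salvage your Riesz-projection route, you would need (i) the $\delta$-separation of $E$ from $F$, and (ii) an explicit uniform estimate showing that for $x$ within $O(1/d)$ of a jump $a_k$ the truncated conjugate sum is still $\gtrsim\lambda$ for $d$ large; neither is free.
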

\begin{proof}
	Let $F$ has the form \e {b7} and first suppose that 
	\begin{equation}\label{z10}
	\max F-\min F\le \pi. 
	\end{equation}
	Without loss of generality we can suppose $F\subset [0,\pi]$.
	For $\lambda=\ln \frac{\pi}{\alpha}$ we apply \lem {L1}. We get an open set $E$ of the form \e {b44} satisfying the conditions of lemma. From \e {b41} it follows that
	\begin{equation*}
	|E|=\left(\frac{\pi}{\alpha}-1\right)|F|=\pi-\alpha<\pi.
	\end{equation*}
	According the structure of the set $E$ coming from \lem {L1}, we get $E\subset [-\pi,\pi]$. Take $\delta>0$ and consider the following modification of the set $E$: 
	\begin{equation}\label{z55}
	\tilde E=\bigcup_{k=1}^n(c_k+\delta,a_k-\delta).
	\end{equation}
	For small enough $\delta$ from \e {b40} we get
	\begin{equation*}
	H\ZI_{\tilde E}(x)>\lambda,\quad x\in F.
	\end{equation*}
	Set
	\begin{equation*}
	f_m(x)=\ZI_{\tilde E}(x)\sign(\sin mx),\quad m=1,2,\ldots.
	\end{equation*}
	For the modified partial sums we have 
	\begin{align*}
	S^*_m(x,f_m)&=\frac{1}{\pi}\int_{-\pi}^{\pi}\frac{\sin m(x-t)}{x-t}f_m(t)dt\\
	&=\frac{\sin mx}{\pi}\int_{-\pi}^{\pi}\frac{\cos mt}{x-t}f_m(t)dt-\frac{\cos mx}{\pi}\int_{-\pi}^{\pi}\frac{\sin mt}{x-t}f_m(t)dt
	\end{align*}
Applying \e {z53}, as $m\to\infty $ for $x\in F$ we get 
\begin{align*}
\int_{-\pi}^{\pi}\frac{\cos mt}{x-t}f_m(t)dt&=\int_{-\pi}^{\pi}\frac{\ZI_{\tilde E}(t)}{x-t}\cdot \cos mt\,\sign(\sin mt)dt\\
&\to \frac{1}{2\pi}\int_{-\pi}^{\pi}\frac{\ZI_{\tilde E}(t)}{x-t}dt\cdot \int_{-\pi}^{\pi}\cos t\,\sign(\sin t)dt=0
\end{align*}
and 
\begin{align*}
\int_{-\pi}^{\pi}\frac{\sin mt}{x-t}f_m(t)dt&=\int_{-\pi}^{\pi}\frac{\ZI_{\tilde E}(t)}{x-t}\cdot \sin mt\,\sign(\sin mt)dt\\
&\to \frac{1}{2\pi} \int_{-\pi}^{\pi}\frac{\ZI_{\tilde E}(t)}{x-t}dt\cdot \int_{-\pi}^{\pi}|\sin t|dt=\frac{\pi}{2}\cdot H\ZI_{\tilde E}(x).
\end{align*}
Since the sets $\tilde E$ and $F$ have positive distance (see \e {z55}, \e {b7}), in both limits the convergence is uniformly on $F$. Thus for enough bigger $m$ we will have
\begin{equation*}
|S^*_m(x,f_m)|>\frac{\pi\lambda }{2}|\sin mx|,\quad x\in F.
\end{equation*}
Again, since $\dist(\tilde E,F)>0$, a proper approximation of $f_m$ by a real polynomial $f(x)$ implies
\begin{equation*}
|S^*_m(x,f)|>\frac{\pi\lambda }{3}|\sin mx|,\quad x\in F.
\end{equation*}
Similarly, taking instead of $f_m$ the sequence 
\begin{equation*}
g_m(x)=\ZI_{E}(x)\sign(\cos mx),\quad m=1,2,\ldots,
\end{equation*}
we will get another real polynomial $g(x)$ such that 
\begin{equation*}
|S^*_m(x,g)|>\frac{\pi\lambda }{3}|\cos mx|,\quad x\in F.
\end{equation*}
For the complex polynomial $P=f+gi$ we will have
\begin{equation*}
|S^*_m(x,P)|\ge \frac{\pi\lambda }{3}(|\sin mx|+|\cos mx|)\ge \frac{\pi\lambda }{3},\quad x\in F,
\end{equation*}
and so it will satisfy the condition of lemma. If $F$ is arbitrary, then we have $F=F_1\cup F_2$, where each $F_1$ and $F_2$ satisfy \e {z10}. Let $P_1$ and $P_2$ be the polynomials corresponding to $F_1$ and $F_2$. Suppose the degrees of those polynomials are less that $n$. One can check that the polynomial $P(x)=P_1(x)+e^{inx}P_2(x)$ satisfies \e {z11}.
\end{proof}

\end{document}